\documentclass[12 pt]{article}
\usepackage[utf8]{inputenc}
\usepackage{authblk}
\usepackage{setspace}
\usepackage[margin=1.25in]{geometry}
\usepackage{graphicx}
\graphicspath{ {./figures/} }
\usepackage{subcaption}
\usepackage{amsmath}
\usepackage{lineno}
\usepackage[english]{babel}
\usepackage{amsthm}
\newtheorem{theorem}{Theorem}[section]
\newtheorem{lemma}[theorem]{Lemma}
\newtheorem{corollary}{Corollary}[theorem]



\title{Asymptotic Lower Bounds for the Feedback Arc Set Problem in Random Graphs}


\author[1*]{Harvey Diamond}
\author[2\dag ]{Mark Kon\thanks{$\dag$Research partially supported by National Science Foundation grant DMS-23-19011.}}
\author[3]{Louise Raphael}


\affil[1] {\footnotesize{Department of Mathematics, West Virginia University, Morgantown, WV 26506}}
\affil[2]{Department of Mathematics and Statistics, Boston University, Boston, MA, 02215}
\affil[3] {Department of Mathematics, Howard University, Washington, DC 20059}
\affil[*]{Address correspondence to: harvey.diamond@mail.wvu.edu}

\date{}

\onehalfspacing
\begin{document}
\maketitle

\begin{abstract}
\begin{linenumbers}
Given a directed graph, the Minimum Feedback Arc Set (FAS) problem asks for a minimum (size) set of arcs in a directed graph, which, when removed, results in an acyclic graph.
In a seminal paper, Berger and Shor [1], in 1990, developed initial upper bounds for the FAS problem in general directed graphs.  Here we find asymptotic \textit{lower bounds} for the FAS problem in a class of random, oriented, directed graphs derived from the Erd\H{o}s-R\'{e}nyi model $G(n,M)$, with n vertices and M (undirected) edges, the latter randomly chosen. Each edge is then randomly given a direction to form our directed graph.  We show that

$$Pr\left(\textbf{Y}^* \le M \left( \frac{1}{2} -\sqrt{\frac{\log n}{\Delta_{av}}}\right)\right)$$ approaches zero exponentially in $n$, with $\textbf{Y}^*$ the (random) size of the minimum feedback arc set and $\Delta_{av}=2M/n$ the average vertex degree. Lower bounds for random tournaments, a special case, were obtained by Spencer [12] and de la Vega [13] and these are discussed.  
In comparing the bound above to averaged experimental FAS data on related random graphs developed by K. Hanauer [7] we find that the approximation $\textbf{Y}^*_{av} \approx M\left( \frac{1}{2} -\frac{1}{2}\sqrt{\frac{\log n}{\Delta_{av}}}\right)$ lies remarkably close graphically to the algorithmically computed average size $\textbf{Y}^*_{av}$ of minimum feedback arc sets. \par 
\noindent
\textit{Keywords}  feedback arc set, random graphs, asymptotic lower bounds
 \end{linenumbers}
\end{abstract}


\section{Introduction}

In a directed graph (digraph) $D =(V,E)$ with $V$ the set of vertices and $E$ the set of directed arcs, the Minimum Feedback Arc Set (FAS) problem is to find a minimum (size) subset $E'$ of arcs such that removing these arcs produces a digraph $D'=(V,E \setminus E')$ that is acyclic. As noted in the classic text of F. Harary [8], a digraph is acyclic if and only if it is possible to order the vertices of $D$ so that every directed edge goes from a lower-numbered to a higher-numbered vertex. As such, in the FAS problem, we are renumbering the vertices so that when the feedback arcs (from higher-numbered to lower-number vertices) are removed the set of remaining (feedforward) arcs is as large as possible. Following notation introduced in Eades [4] we let $F(D)$ denote a feedback arc set, so that $D(V,E\setminus F(D))$ is acyclic. $F^*(D)$ denotes a feedback arc set of minimum cardinality $|F^*(D)|$. We use $n$ to denote $|V|$ and $M$ to denote $|E|$. An  \textit{oriented directed graph} is one which has no two-cycles (arcs in both directions between two vertices). In what follows, the \textit{degree} of a vertex is the sum of its in-degree and out-degree.

The FAS problem is longstanding in the field of graph theory and its applications. While originating as a problem in circuit theory in a paper by D. Younger [14], algorithmic interest as a graph theoretic problem can be dated from its identification as an NP-complete problem in Richard Karp's landmark 1972 paper [10] containing a compilation, with proofs, of important NP-complete combinatorial problems. We note in this regard the recent survey text by Robert Kudelić [11] of results over the subsequent 50 years, primarily concerning the development of practical algorithms for this problem. In the introductory chapter, Kudelić ([11], page 11) provides a listing, with references, of over 30 applications in computing, operations research, and other application areas. 

\begin{linenumbers}
    
As an NP-complete problem, a minimum feedback arc set is difficult to find, so that it is useful not only to develop effective algorithms, but to determine upper and lower bounds for the minimum feedback arc set, the setting within which this paper lies.  For general directed graphs, the seminal works by Bonnie Berger and Peter W. Shor [1] in 1990 and following, develop upper bounds, with accompanying algorithms. In [1], they obtain the result 
\end{linenumbers}
$$|F^*(D)| \le M(1/2-\Omega(\Delta^{-1/2}))$$
and further exhibit a special class of directed graphs for which 

$$|F^*(D)| \ge M(1/2-O(\Delta^{-1/2}))$$

\noindent where $\Delta$ is the maximum vertex degree, showing that their general upper bound is sharp with respect to order. Their results and algorithms have since been extended and sharpened in many ways, e.g. the recent paper by Jacob Fox, Zoe Himwich, and Nitya Mani [6]. 

Significant early results on bounds for the FAS problem in tournaments were obtained by Erd\H{o}s and Moon [5], followed by Spencer [12], and subsequently improved by de la Vega  [13] using a different proof technique. 

This paper considers the problem of \textit{lower bounds} for $|F^*(D)|$ for an important general family of directed, oriented random graphs, associated with the well-known Erd\H{o}s-R\'{e}nyi $G(n,M)$ model, as described in Bollob\'{a}s [2] . In the model considered here, $M$ undirected arcs between $n$ vertices are chosen at random as in the $G(n,M)$ construction, and then the directed graph $\textbf{D}$ is obtained by randomly choosing, for each arc, a direction with equal probability. We apply boldface to the directed graph $\textbf{D}$
 if it is obtained via a random construction. 

Lower bounds seem to have not received much attention in the literature, perhaps because for many classes of graphs, the lower bound for $|F^*(D)|$ is $0$.  While an upper bound tells you to keep working if the size of your feedback set exceeds it, a lower bound tells you when it may not be effective to keep working, if you have already obtained a set not much larger in size than that bound. We would like to get a sense of how likely it is that minimum feedback arc sets of a given size occur, within the setting of the $G(n,M)$ random model. In fact, as a special case we show that if $\frac{n\log(n)}{M}\rightarrow 0$, then for any given $\epsilon>0$ the probability that $|F^*(\textbf{D})|>(M/2)(1-\epsilon)$ approaches one as $n\rightarrow \infty$. An asymptotic bound for $\epsilon=\epsilon(n,M)$ that is sufficient to maintain that probabilistic result is our main result, obtained in Section 3. In this connection we note the following result (slightly paraphrased) which appears at the end of Erd\H{o}s and Moon [5] which applies to oriented directed graphs: 

\begin{quote} 

(Erd\H{o}s and Moon) If $f(n,M)$ is the greatest (integer) lower bound for the maximum size of an acyclic subgraph in every oriented directed graph with $n$ vertices and $M$ edges, and we assume $n\log(n)/M \rightarrow 0$ as $M,n \rightarrow \infty$ then $\displaystyle {\lim_{n \rightarrow \infty} \frac{f(n,M)}{M}=1/2}$.

\end{quote}

\noindent The import of this result in the present case is that for large enough values of $M$, $n$ satisfying the asymptotic condition given, for any given $\epsilon>0$ there are oriented directed graphs for which $|F^*(D)|>(M/2)(1-\epsilon)$. This only guarantees the existence of such graphs but doesn't tell us how numerous they are. But in fact, as noted above, almost all such large graphs are ``bad'' in this sense of requiring almost half the arcs to be removed in order to obtain an acyclic remaining subgraph. We would note that the present authors derived this property for the $G(n,p)$ random model  in their earlier paper [3]. In this model (see Bollob\'{a}s [2] ), where two-cycles are possible, each possible arc from vertex i to vertex j is chosen independently with probability p. Our current $G(n,M)$ model is actually easier to analyze mathematically, and it naturally results in oriented, directed graphs, which are more commonly the subject of research in this area.

Of particular salience to our development is the result of de la Vega [13] for random tournaments, which can be restated for minimum feedback arc sets as follows:
$$Pr\left(|F^*(\textbf{D})| \ge \frac{M}{2}-1.73n^{3/2}\right) \rightarrow 1 \text{ as } n\rightarrow \infty .$$

These random tournaments are special cases of our random graphs where $M=\binom{n}{2}$, i.e. every arc is included but the direction is random. A similar bound applicable to our more general random graphs would therefore be desirable but is seemingly not available with the techniques used here. 

The remainder of the paper is organized as follows: 

In Section 2 we formulate our problem mathematically and develop some basic probabilistic inequalities in considering the $n!$ vertex orderings. Separately, we obtain inequalilties that apply to the tails of the binomial distribution $B(M,1/2)$. In Section 3 we use these inequalities to obtain our lower bound for the size of the minimum feedback arc set. The provided lower bound is a probabilistic one, where the probability of a smaller $|F^*(\textbf{D})|$ than our bound is exponentially small in $n$. The combinatorial and probabilistic techniques employed are not particularly novel, but we believe our results are new and provide another useful ``datapoint'' in understanding the size of minimum feedback arc sets in widely used classes of directed graphs. 

In Section 4, we compare our lower bound with a robust set of experimental results from FAS algorithms applied to a suite of random graphs, as obtained by Kathrin Hanauer in her Master's thesis [7]. We discover that a small modification of our lower bound produces a direct formula approximating the minimum FAS size that graphically matches the experimental data with striking accuracy over a range of edge densities and number of vertices. 

\section{Formulation and  Basic Inequalities}
In conceptualizing this problem we find it convenient to work with the adjacency matrix $A(D)$ of a directed graph $D$. This is a $0/1$ matrix where a $1$ in the $(i,j)$ position corresponds to an arc  from vertex $i$ to vertex $j$. Hence a $1$ where $j>i$ above the diagonal indicates an arc from a vertex $i$ to a higher-numbered vertex $j$. A $1$ below the diagonal corresponds to an arc from a higher-numbered vertex to a lower numbered vertex. The latter are called feedback arcs.  The FAS problem is then to find a renumbering of the vertices that results in a minimum number of $1's$ below the diagonal. These $1's$ identify the feedback arcs to be eliminated. 

As discussed in the papers [1]  and [6], in which various bounds and algorithms are considered, we will use $\textit{oriented directed graphs}$, which are undirected graphs for which each edge is assigned a direction. Equivalently, these are directed graphs without 2-cycles.  Our goal in this paper is to obtain $\textit{lower}$ bounds on the size of the minimum  feedback arc set, that apply with high probability, asymptotically increasing to 1 with the number of vertices, in a class of oriented directed, random graphs. 

The graphs we consider will be random oriented directed graphs on $n$ vertices generated from the Erd\H{o}s-R\'{e}nyi $G(n,M)$ model, in which we have $n$ vertices and then $M$ randomly chosen undirected arcs between the vertices. Each such arc is then randomly (with probability 1/2) assigned a direction, resulting in an oriented directed graph $\textbf{D}$. Considered as a random experiment, starting with a zero $n \times n$ matrix we are choosing $M$ locations above the diagonal in which to insert a $1$, and then each such $1$ located in the $(i,j)$ position, $j>i$, is relocated to the $(j,i)$ position below the diagonal with probability $1/2$. We note that in a previous arXiv paper  [3] the authors considered the model referred to there as $D(n,p)$ where directed graphs are constructed so that each possible directed arc is included in the graph with probability p. Probability bounds similar to those of Section 2 were obtained there. However, the model $G(n,M)$ used here is more germane to research in the field, and we also have available, as discussed in Section 4, a collection of related experimental results on the size of the feedback arc set that can be readily compared with the theoretical results presented in Section 3. We would remark finally that the random graphs considered here, with their statistically uniform structure, are not typical of practical examples, but they can serve nevertheless as a useful reference example and are worth considering for their mathematical interest. 

We begin with some notation and terminology. In our random graph construction, we define the random outcome as the adjacency matrix $\textbf{A}$ with $\textbf{D(A)}$ as the associated directed graph, and the random variables  $\textbf{X(A)}$ and $\textbf{Y(A)}$ as the number of 1's above and below the diagonal, respectively. We observe that $\textbf{Y}$ is a binomial random variable with binomial distribution $B(M,1/2)$ and $\textbf{X}+\textbf{Y}=M$. We refer to a realizable arrangement of 0's and 1's under this construction as a $\textit{configuration}$. Boldface generally indicates the outcome of a random experiment, while normal typeface e.g. $A,X,Y$
refer to particular or generic nonrandom cases. By a permutation of an adjacency matrix $A$ we mean a matrix $P^TAP$ for some permutation matrix $P$, or equivalently, the adjacency matrix of some vertex reordering of the directed graph equivalent to $A$. 

For any given directed graph considered here (whether random or not), with adjacency matrix $A$ we define $A^*(A)$ as an adjacency matrix that results from a solution of the FAS problem, with corresponding numbers $X^*$ and $Y^*$ of ones above and below the diagonal respectively with $X^*+Y^*=M$.  Thus $A^*=P^TAP$ for some $n \times n$ permutation matrix $P$ and and $Y^*$ is the minimum number of ones that appear below the diagonal among all such permutations. Correspondingly, given an adjacency matrix $A^*$ with a minimum set of feedback arcs, the adjacency matrices $\{P^TA^*P\}$ are precisely the adjacency matrices that have $A^*$ as an optimal reconfigured feedback arc set.  The matrix entries of $A^*$ we refer to as an $\textit{optimal configuration}$

We now develop the basic inequalities used in the next section to obtain our lower bound. We begin with a simple lemma providing a bound on the probability that the minimum feedback arc set has no more than $k$ edges, i.e., the event $\textbf{Y}^* \le k$. Because there are $n!$ permutations of the vertex ordering, a minimum feedback arc set with no more than $k$ arcs can have originated from any of the $n!$ permutations of its corresponding adjacency matrix. If we enlarge this set of adjacency matrices to those with $Y\le k$, the set of $n!$ permutations of this set cannot be any smaller. This gives rise to a simple inequality:

\begin{lemma} For our random graph construction, we have the following:

$Pr(\textbf{Y}^* \le k) \le n!Pr(\textbf{Y} \le k)$.

\end{lemma}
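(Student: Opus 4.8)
The plan is to bound the event $\{\textbf{Y}^* \le k\}$ by a union over vertex orderings of the ``easy'' event that a fixed ordering already witnesses at most $k$ feedback arcs. The key observation is that $\textbf{Y}^* \le k$ holds precisely when \emph{some} permutation $P$ of the random adjacency matrix $\textbf{A}$ satisfies $\textbf{Y}(P^T\textbf{A}P) \le k$; that is, $\{\textbf{Y}^* \le k\} = \bigcup_{P} \{\textbf{Y}(P^T\textbf{A}P) \le k\}$, the union taken over the (at most) $n!$ permutation matrices $P$. Applying the union bound immediately gives
$$Pr(\textbf{Y}^* \le k) \le \sum_{P} Pr\bigl(\textbf{Y}(P^T\textbf{A}P) \le k\bigr).$$

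The crucial step is then to argue that each summand equals $Pr(\textbf{Y} \le k)$, i.e.\ that $\textbf{Y}(P^T\textbf{A}P)$ has the same distribution as $\textbf{Y}=\textbf{Y}(\textbf{A})$ for every fixed $P$. This follows from the symmetry built into the $G(n,M)$-with-random-orientation construction: for a fixed permutation $P$, the map $A \mapsto P^TAP$ is a bijection on the set of configurations that preserves the generating probability — relabeling the vertices before choosing which $M$ undirected edges to include and then orienting each independently with probability $1/2$ yields exactly the same distribution as choosing and orienting first and relabeling after. Hence $\textbf{Y}(P^T\textbf{A}P) \stackrel{d}{=} \textbf{Y}(\textbf{A})$, so each term in the sum is $Pr(\textbf{Y}\le k)$; since there are at most $n!$ terms, we conclude $Pr(\textbf{Y}^* \le k) \le n!\,Pr(\textbf{Y} \le k)$, as claimed. (One can equivalently phrase this via the counting heuristic already sketched in the text: each adjacency matrix with $Y \le k$ has at most $n!$ distinct permutations, and every matrix with $\textbf{Y}^* \le k$ is a permutation of some such matrix, so the ``bad'' set is covered by $n!$ translates of the set $\{Y \le k\}$; the probability version requires the distributional invariance just described, which is the honest content of the lemma.)

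The main obstacle — really the only subtlety — is the distributional invariance claim $\textbf{Y}(P^T\textbf{A}P)\stackrel{d}{=}\textbf{Y}(\textbf{A})$, together with the bookkeeping that distinct permutation matrices may give the same configuration (so the union is over \emph{at most} $n!$ distinct events, which only helps the inequality). Everything else is the union bound. It is worth being slightly careful that $\textbf{Y}$ being $B(M,1/2)$ is not what drives the argument — what matters is that the law of $\textbf{A}$ is invariant under vertex relabeling — so the lemma in fact holds for any vertex-exchangeable random-graph model, and the binomial structure of $\textbf{Y}$ is only exploited later in Section 3 when $Pr(\textbf{Y}\le k)$ is estimated.
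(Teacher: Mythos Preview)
Your proof is correct and follows essentially the same route as the paper: identify $\{\textbf{Y}^*\le k\}$ with the union over permutations $P$ of the events $\{Y(P^T\textbf{A}P)\le k\}$, apply the union bound, and use that each permuted event has the same probability as $\{\textbf{Y}\le k\}$. The only differences are cosmetic---the paper phrases the equal-probability step as ``each such set in the union has the same probability'' without spelling out the vertex-exchangeability argument, and your closing remark on general exchangeable models is an addition not in the paper.
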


\begin{proof}
First we show that the sets $E^*=\{A:Y^*(A) \le k\}$ and $E=\{A: \exists P, Y(P^TAP) \le k\}$ are the same. Clearly if $A \in E^*$ then for some permutation matrix $P$, we have $Y(P^TAP)=Y^* \le k$, showing that  $E^*\subset E$. On the other hand, every adjacency matrix $A$ in $E$ has a reordering with at most $k$ feedback arcs, so that $Y^*(A) \le k$, showing that $E \subset E^*$. Now the set $E$ consists of all the permutations of matrices $A$ for which $Y(A) \le k$ since every permuation is invertible via a permutation. Hence $E$ consists of the union, over the $n!$ different permutations $P$,  of the set $\{A:Y(A) \le k\} $, namely the set $E=\bigcup_{P}\{P^TAP:Y(A) \le k\}$. Each such set in the union has the same probability and by the subadditivity property of probability of a union, we have $Pr(\textbf{Y}^* \le k) \le n!Pr(\textbf{Y} \le k)$.
\end{proof}
We note that our sets are not in general disjoint since $Y^*(A)<Y(A)$ will be true in some cases, and the same matrix $P^TAP$ may arise from two different $A,P$ for which 
$Y(A) \le k$ . The utility of the result is that $Pr(\textbf{Y}(\textbf{A}) \le k)$ can be readily estimated and provides the inequality we are looking for. 

Under the random construction, the random variable $\textbf{Y}$ has the binomial distribution $B(M,1/2)$. We are interested in the probability that $\textbf{Y} \le k$ for some $k<M/2$, as our question is how small can we expect the minimum feedback arc set to be under this random construction.

\begin{linenumbers}
\begin{theorem} 
 $Pr(\textbf{Y} \le M ( \frac{1}{2} -t)) \le \exp(-2Mt^2)$.
\end{theorem}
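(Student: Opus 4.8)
The plan is to recognize Theorem 2.2 as a standard Chernoff/Hoeffding tail bound for the binomial distribution $B(M,1/2)$, and to prove it via the exponential moment method. First I would write $\textbf{Y} = \sum_{i=1}^M Z_i$ where the $Z_i$ are i.i.d. Bernoulli$(1/2)$ random variables (the indicator that the $i$-th chosen edge points ``backward''). Then for any $\lambda > 0$, Markov's inequality applied to $e^{-\lambda \textbf{Y}}$ gives
\begin{equation}
Pr\left(\textbf{Y} \le M\left(\tfrac{1}{2} - t\right)\right) = Pr\left(e^{-\lambda \textbf{Y}} \ge e^{-\lambda M(1/2 - t)}\right) \le e^{\lambda M(1/2 - t)}\, \mathbb{E}\!\left[e^{-\lambda \textbf{Y}}\right].
\end{equation}
By independence, $\mathbb{E}[e^{-\lambda \textbf{Y}}] = \left(\mathbb{E}[e^{-\lambda Z_1}]\right)^M = \left(\tfrac{1}{2}(1 + e^{-\lambda})\right)^M$, so the whole bound becomes $\left(\tfrac{1}{2}(1+e^{-\lambda})\,e^{\lambda(1/2-t)}\right)^M$.

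Next I would bound the per-coordinate factor. The cleanest route is Hoeffding's lemma: for a random variable $W$ bounded in $[a,b]$ with mean $\mu$, one has $\mathbb{E}[e^{s(W-\mu)}] \le e^{s^2(b-a)^2/8}$. Applying this to $W = -Z_1$ (bounded in $[-1,0]$, mean $-1/2$) with $s = \lambda$ yields $\mathbb{E}[e^{-\lambda(Z_1 - 1/2)}] \le e^{\lambda^2/8}$, hence $\mathbb{E}[e^{-\lambda \textbf{Y}}] \le e^{-\lambda M/2} e^{\lambda^2 M/8}$ and the tail probability is at most $e^{-\lambda M t + \lambda^2 M/8}$. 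Optimizing the exponent $-\lambda M t + \lambda^2 M / 8$ over $\lambda > 0$ gives $\lambda = 4t$, producing exponent $-2Mt^2$, which is exactly the claimed bound $\exp(-2Mt^2)$. Alternatively, if one wishes to avoid invoking Hoeffding's lemma as a black box, I would instead verify directly that $g(\lambda) := \log\!\big(\tfrac{1}{2}(1+e^{-\lambda})\big) + \tfrac{\lambda}{2} \le \tfrac{\lambda^2}{8}$ for all $\lambda \ge 0$, e.g.\ by checking $g(0) = g'(0) = 0$ and $g''(\lambda) \le 1/4$ everywhere (since $g''(\lambda)$ is a variance of a $[0,1]$-valued random variable, it is at most $1/4$).

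The main obstacle here is not conceptual — the result is a textbook inequality — but rather a matter of presentation: deciding how self-contained to make the argument. The only genuine computational content is the bound $g''(\lambda) \le 1/4$, equivalently that the variance of a Bernoulli-type variable obtained by tilting is at most $1/4$; this is where a careful writer would spend a line or two, and it is the step most prone to sloppy justification. Everything else (Markov, independence, the final single-variable optimization $\lambda = 4t$) is routine. I would also remark that the bound is stated without any restriction $t \le 1/2$, which is harmless since for $t > 1/2$ the left-hand event is empty and the inequality holds trivially; I would note this so the reader is not distracted by edge cases.
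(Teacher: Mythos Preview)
Your proof is correct. The paper's own argument is essentially the same: it simply cites Hoeffding's inequality for sums of bounded independent random variables and specializes it to $\textbf{Y}\sim B(M,1/2)$, whereas you unpack the standard Chernoff/Hoeffding-lemma proof of that same inequality in this special case.
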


\begin{proof}

This result follows from the well-known theorem of Wassily Hoeffding [9]:
     Let $\textbf{X}_1,..,\textbf{X}_n$ be independent random variables satisfying $a_i \le \textbf{X}_i \le b_i$ almost surely. Set $\textbf{S}_n=\textbf{X}_1+...+\textbf{X}_n$. Then
    $$Pr(\textbf{S}_n-E[\textbf{S}_n] \ge t) \le \exp \left(-\frac{2t^2}{\sum_{i=1}^{n} (b_i - a_i)^2}\right).$$
The theorem applies as well to the left tail, namely to $Pr(\textbf{S}_n-E[\textbf{S}_n] \le -t)$. 
Applying Hoeffding's theorem in this form to our case, where $\textbf{Y}=\sum_{i=1}^{M}{\textbf{X}_i}$, with $0 \le \textbf{X}_i \le 1$ and $E[\textbf{Y}]=M/2$ we obtain 
$$Pr(\textbf{Y}-M/2 \le -t) \le \exp \left(-\frac{2t^2}{M}\right) \text{,}$$
and then replacing $t$ by $Mt$, the result follows. 

\end{proof}

\end{linenumbers}
Remark: The variance of $\textbf{Y} \sim B(M,p)$, with $p=1/2$ is $M/4$. The normal approximation of the binomial distribution would suggest that

$$Pr(\textbf{Y}-M/2 \le -s \sqrt{M/4}) \sim \frac{1}{\sqrt{2\pi}}\int_{-\infty}^{-s} \exp(-u^2/2)du$$ 

\noindent while Theorem 2.2 provides the inequality (using $t=s/\sqrt{4M}$ )

$$Pr(\textbf{Y}-M/2 \le -s \sqrt{M/4}) \le \exp(-s^2/2)$$

\noindent so the Hoeffding bound gives us the "correct exponent" even for small values of $s$ and the expressions differ only by an algebraic factor $O(1/s)$ that will not significantly affect the results were we to seek a more sophisticated inequality for the tails.

\section{Asymptotic Lower Bounds}
As noted in the introduction, Berger and Shor [1] developed a randomized algorithm demonstrating an $\textit{upper}$ bound on the minimum feedback arc set of at most $M(\frac{1}{2}-\frac{C}{\sqrt{\Delta}})$, where $M$ is the number of edges, $\Delta$ is the maximum vertex degree and $C>0$ is a constant that depends only on $\Delta$. Here we develop for our class of random graphs a probabilistic $\textit{lower}$ bound of a related form, with probability converging exponentially to one, asymptotically as the number of vertices $n$ increases to infinity. 

From Lemma 2.1 we have $Pr(\textbf{Y}^* \le k) \le n!Pr(\textbf{Y} \le k)$ and Theorem 2.2 provides the inequality $Pr(\textbf{Y} \le M ( \frac{1}{2} -t)) \le \exp(-2Mt^2)$ from which it follows that 
\begin{equation}
    Pr \left(\textbf{Y}^* \le M ( \frac{1}{2} -t) \right) \le n!\exp(-2Mt^2).
\end{equation}
The interplay between $n$,$M$, and $t$ provides an opportunity to develop conditions for which $Pr(\textbf{Y}^* \le M ( \frac{1}{2} -t))) \rightarrow 0$. One such set of conditions gives us the following theorem:
\begin{theorem} If $\Delta_{av}=\frac{2M}{n}$ denotes the average vertex degree, and $\textbf{Y}^*$ is the size of the minimum feedback arc set in random graphs of the $G(n,M)$ model, we have
    $$Pr\left(\textbf{Y}^* \ge M \left( \frac{1}{2} -\sqrt{\frac{\log n}{\Delta_{av}}}\right)\right) \ge 1-3\sqrt{n}e^{-n} \text{.}$$
\end{theorem}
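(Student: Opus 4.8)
The plan is to feed the specific threshold $t=\sqrt{\log n/\Delta_{av}}$ into inequality (1), namely $Pr\left(\textbf{Y}^*\le M(\frac{1}{2}-t)\right)\le n!\exp(-2Mt^2)$, and then to show that its right-hand side is at most $3\sqrt{n}\,e^{-n}$; the theorem follows by passing to complements.

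First I would use $\Delta_{av}=2M/n$ to rewrite $t^{2}=\frac{\log n}{\Delta_{av}}=\frac{n\log n}{2M}$, so that the exponent collapses: $2Mt^{2}=n\log n$, whence $\exp(-2Mt^{2})=n^{-n}$. Inequality (1) then reads $Pr\left(\textbf{Y}^{*}\le M(\frac{1}{2}-t)\right)\le n!\,n^{-n}$.

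Next I would invoke Stirling's estimate in the explicit form $n!\le e\sqrt{n}\,(n/e)^{n}$, valid for every $n\ge 1$, to obtain $n!\,n^{-n}\le e\sqrt{n}\,e^{-n}\le 3\sqrt{n}\,e^{-n}$ since $e<3$. Hence $Pr\left(\textbf{Y}^{*}\le M(\frac{1}{2}-t)\right)\le 3\sqrt{n}\,e^{-n}$. Because the event $\{\textbf{Y}^{*}\ge M(\frac{1}{2}-t)\}$ contains the complement $\{\textbf{Y}^{*}>M(\frac{1}{2}-t)\}$ of the event just bounded, we conclude $Pr\left(\textbf{Y}^{*}\ge M(\frac{1}{2}-t)\right)\ge 1-3\sqrt{n}\,e^{-n}$, as asserted.

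I do not anticipate a genuine obstacle here: the statement is in essence a corollary of Lemma 2.1 and Theorem 2.2, obtained by choosing $t$ so that $2Mt^{2}=n\log n$ cancels exactly the leading term $n\log n$ of $\log(n!)$. The only points that need a line of care are cosmetic --- one verifies that the Stirling constant really does drop below $3$, and one disposes of the degenerate regime $t\ge\frac{1}{2}$ (equivalently $M\le 2n\log n$), where $M(\frac{1}{2}-t)\le 0\le\textbf{Y}^{*}$, so that the claimed inequality holds with probability $1$ and nothing is lost. This last observation also makes clear why the condition $n\log n/M\to 0$ stressed in the introduction is the operative one: it is precisely what forces $t\to 0$, so that the lower bound $M(\frac{1}{2}-t)$ approaches $M/2$.
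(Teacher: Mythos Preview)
Your argument is correct and follows essentially the same route as the paper: substitute $t=\sqrt{\log n/\Delta_{av}}$ into inequality~(1) so that $2Mt^{2}=n\log n$ cancels the leading term of $\log(n!)$, then bound the residual via Stirling to get $3\sqrt{n}\,e^{-n}$. The only cosmetic difference is that the paper writes Stirling in the logarithmic form $\log(n!)\le n\log n-n+\tfrac{1}{2}\log(2\pi n)+\tfrac{1}{12n}$ and bounds $\sqrt{2\pi n}\,e^{1/(12n)}\le 3\sqrt{n}$, whereas you use the equivalent multiplicative form $n!\le e\sqrt{n}\,(n/e)^{n}$; your added remark on the degenerate regime $t\ge\tfrac{1}{2}$ is a nice touch that the paper defers to the discussion after the proof.
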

\begin{proof}

\noindent Stirling's formula provides that $\log (n!) \le n \log n -n+\frac{1}{2} \log (2\pi n)+\frac{1}{12n} $. In Equation (1) we express $n!=\exp(\log(n!))$, and then using Stirling's formula, we choose the value of $t$ that will cancel the $n\log n $ term in the exponent, namely $t=\sqrt{\dfrac{\log n}{\Delta_{av}}}$. We then have

    $$Pr \left( \textbf{Y}^* \le M \left( \frac{1}{2} -\sqrt{\frac{\log n}{\Delta_{av}}}\right) \right) \le \sqrt{2\pi n}\exp\left(-n+\frac{1}{12n}\right)\le 3\sqrt{n}e^{-n}\text{,}$$
    rounding up to a bound $3$ for the constant factor. Finally, we reverse the inequality, obtaining the Theorem in terms of a lower bound that holds asymptotically with probability converging to $1$ exponentially, as $n \rightarrow \infty$.  

\end{proof}

We observe that the lower bound in Theorem 3.1 is clearly only useful when $M$ increases with $n$ in such a way that $\sqrt{\dfrac{\log n}{\Delta_{av}}}=\sqrt{\dfrac{n\log n}{2M}}$ is less than $1/2$, and in most practical cases we would like $\log n=o(\Delta_{av})$. In particular, we can observe the following corollary:
\begin{corollary}
    For any $\epsilon>0$, if $\frac{n\log n}{M}\rightarrow 0$ then $Pr\left(\textbf{Y}^* \ge M ( \frac{1}{2} -\epsilon)\right)\rightarrow 1$. 
\end{corollary}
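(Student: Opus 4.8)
The plan is to read the corollary off Theorem 3.1 as a limiting statement, using two ingredients: a monotonicity remark about the events $\{\mathbf{Y}^*\ge a\}$, and the elementary observation that the hypothesis $\frac{n\log n}{M}\to 0$ forces the correction term appearing in Theorem 3.1 to vanish. First I would rewrite that correction term in terms of $n$ and $M$ alone: since $\Delta_{av}=2M/n$,
\[
\sqrt{\frac{\log n}{\Delta_{av}}}=\sqrt{\frac{n\log n}{2M}},
\]
and the hypothesis $\frac{n\log n}{M}\to 0$ immediately gives $\sqrt{\log n/\Delta_{av}}\to 0$. Hence, for the given $\epsilon>0$, there is an $N$ so that $\sqrt{\log n/\Delta_{av}}<\epsilon$ for all $n\ge N$, and therefore $M\bigl(\tfrac12-\sqrt{\log n/\Delta_{av}}\bigr)>M\bigl(\tfrac12-\epsilon\bigr)$ for such $n$.

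Next I would invoke monotonicity: $\{\mathbf{Y}^*\ge a\}\subseteq\{\mathbf{Y}^*\ge b\}$ whenever $a\ge b$. Applying this with $a=M\bigl(\tfrac12-\sqrt{\log n/\Delta_{av}}\bigr)$ and $b=M\bigl(\tfrac12-\epsilon\bigr)$ for $n\ge N$, and then using Theorem 3.1, gives
\[
\Pr\!\left(\mathbf{Y}^*\ge M\bigl(\tfrac12-\epsilon\bigr)\right)\ \ge\ \Pr\!\left(\mathbf{Y}^*\ge M\bigl(\tfrac12-\sqrt{\tfrac{\log n}{\Delta_{av}}}\bigr)\right)\ \ge\ 1-3\sqrt{n}\,e^{-n}.
\]
Since $3\sqrt{n}\,e^{-n}\to 0$, letting $n\to\infty$ yields $\Pr\bigl(\mathbf{Y}^*\ge M(\tfrac12-\epsilon)\bigr)\to 1$, which is the claim. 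If one prefers not to cite Theorem 3.1 directly, the same conclusion follows from Equation (1) with the fixed choice $t=\epsilon$: one needs $\log(n!)-2\epsilon^2 M\to-\infty$, and by Stirling $\log(n!)=n\log n\,(1+o(1))$, while $2\epsilon^2 M$ grows strictly faster than $n\log n$ precisely because $\frac{n\log n}{M}\to 0$; hence the right-hand side of (1) tends to $0$ and we again take complements.

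There is no substantive obstacle here — the corollary is essentially a restatement of Theorem 3.1 in asymptotic language. The only points demanding a moment's attention are getting the direction of the threshold comparison right (a larger threshold gives a \emph{smaller} event and hence a smaller lower bound on its probability, which is why one compares against the weaker event $\mathbf{Y}^*\ge M(\tfrac12-\epsilon)$), and noting that the bound $1-3\sqrt{n}\,e^{-n}$ from Theorem 3.1 is available for every $n$, since that theorem is stated unconditionally and the hypothesis of the corollary is used only to control the size of the correction term $\sqrt{\log n/\Delta_{av}}$ relative to the fixed $\epsilon$.
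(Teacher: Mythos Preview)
Your proposal is correct and matches the paper's approach: the paper states this corollary as an immediate observation from Theorem 3.1, noting only that $\sqrt{\log n/\Delta_{av}}=\sqrt{n\log n/(2M)}$ and that the bound is useful precisely when this quantity is small. Your argument supplies exactly the details (the monotonicity of the events and the eventual comparison $\sqrt{\log n/\Delta_{av}}<\epsilon$) that the paper leaves implicit, and your alternative route via Equation~(1) with $t=\epsilon$ is also valid.
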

Thus, in this random setting, the same condition $\frac{n\log n}{M}\rightarrow 0$ noted in 
Erd\H{o}s and Moon [5] guarantees that $|F^*(\textbf{D})| \ge  M ( \frac{1}{2} -\epsilon)$ with probability approaching 1 as $n \rightarrow \infty$. Almost ``all'' such graphs with $n$ vertices and $M$ arcs are ``bad'' in requiring that nearly $M/2$ arcs must be removed to obtain an acyclic subgraph. 
\section{Application to Experimental Results}
In her Masters thesis  [7], Kathrin Hanauer investigated the experimental performance of eight FAS algorithms on random graphs, applying further post-processing algorithms in some cases to reduce the size of the set obtained. Among the examples considered were graphs generated from the random construction referred to in the thesis as $ER(n,p)$, in which the undirected Erd\H{o}s-R\'{e}nyi model $G(n,p)$ is the starting point, with each possible arc included with probability $p$, after which a direction is assigned with equal probability.  We compare these results with the lower bounds predicted here for the model $G(n,M)$, choosing for $M$ the expected number of edges, namely $M=p\binom{n}{2}=p\frac{n(n-1)}{2}$. Hanauer ran 20 examples for each $(n,p)$ pair and plotted the average size minimum feedback arc set calculated by each algorithm, so the lower envelope of this data is reasonably taken as an approximation of the size of actual average minimum feedback arc set for random graphs $ER(n,p)$. This plot will be compared with our asymptotic lower bound as calculated from the expected number of edges.  We work directly off the figures that appear in the thesis, as the original data was no longer available. 
\begin{linenumbers}
Using Hanauer's figures from her thesis, we plot the curves
$\Bigl(n, M \bigl( \frac{1}{2} -\sqrt{\frac{\log n}{\Delta_{av}}}\bigr)\Bigr)$
for the given value of $p$, where $M=p\binom{n}{2}$ and $\Delta_{av}=2M/n$. Somewhat by chance, we observed that by halving the deviation from $M/2$ given by $M \sqrt{\frac{\log n}{\Delta_{av}}}$ we create a direct heuristic approximation for the expected minimum size feedback arc set that is nearly indistinguishable from Hanauer's experimental average minimum size FAS curves in most cases, and we include this heuristic curve as well, representing the approximation $\textbf{Y}^*_{av}\approx M \left( \dfrac{1}{2} -\dfrac{1}{2} \sqrt{\dfrac{\log n}{\Delta_{av}}}\right)$ with $\textbf{Y}^*_{av}$ the algorithmically computed experimental average of $\textbf{Y}^*$ . The theoretical lower bound is plotted in orange and the  heuristic approximation is plotted in red. We plot directly on the figures from [8], which include the original captioning. To carry out the plot, the axes were digitized by finding screen coordinates using convenient demarcated points on the axes in the images.  
\end{linenumbers}
\begin{linenumbers}

In Figure 1, plotting from $n=20$, the left figure involves the random graphs $ER(n,p)$ for $p=1/2$, which we will compare to our case where we use $M=\frac{1}{2}\frac{n(n-1)}{2}$. The two curves plotted are therefore

$$\left( n, M \left( \dfrac{1}{2} -\sqrt{\dfrac{\log n}{\Delta_{av}}}\right)\right) \quad \text{and} \quad  
\left(n, M \left( \dfrac{1}{2} -\dfrac{1}{2} \sqrt{\dfrac{\log n}{\Delta_{av}}}\right)\right),$$ 
reflecting our lower bound and our heuristic approximation, respectively. For these experimental results, the thesis notes there was additional post-algorithmic optimization carried out, and hence the experimental estimates of $|F^*(D)|$ even more reliable. On the right in Figure 1 is the case of $p=1$ meaning that we start with every possible (undirected) arc on $n$ vertices. This is in fact a random tournament, so the bound of de la Vega [13] applies. While we recognize a lower bound is not meant as an approximation, if we try to fit the curve $(n,M/2-Cn^{3/2})$ to the data over the interval $[10,200]$, the de la Vega curve, compared with $\left(n, M \left( \dfrac{1}{2} -\dfrac{1}{2} \sqrt{\dfrac{\log n}{\Delta_{av}}}\right)\right)$ curves upward a bit faster than the data would suggest.   In Figure 2 the left figure involves $p=.1$ and so reflects relatively low density. In the final figure (lower right), we see feedback arc sets for different values of $p$ when applied to a fixed number $n=50$ of vertices.  Here we only plot our heuristic approximation. 
\begin{figure}[ht]
    \includegraphics[width=7.5cm]{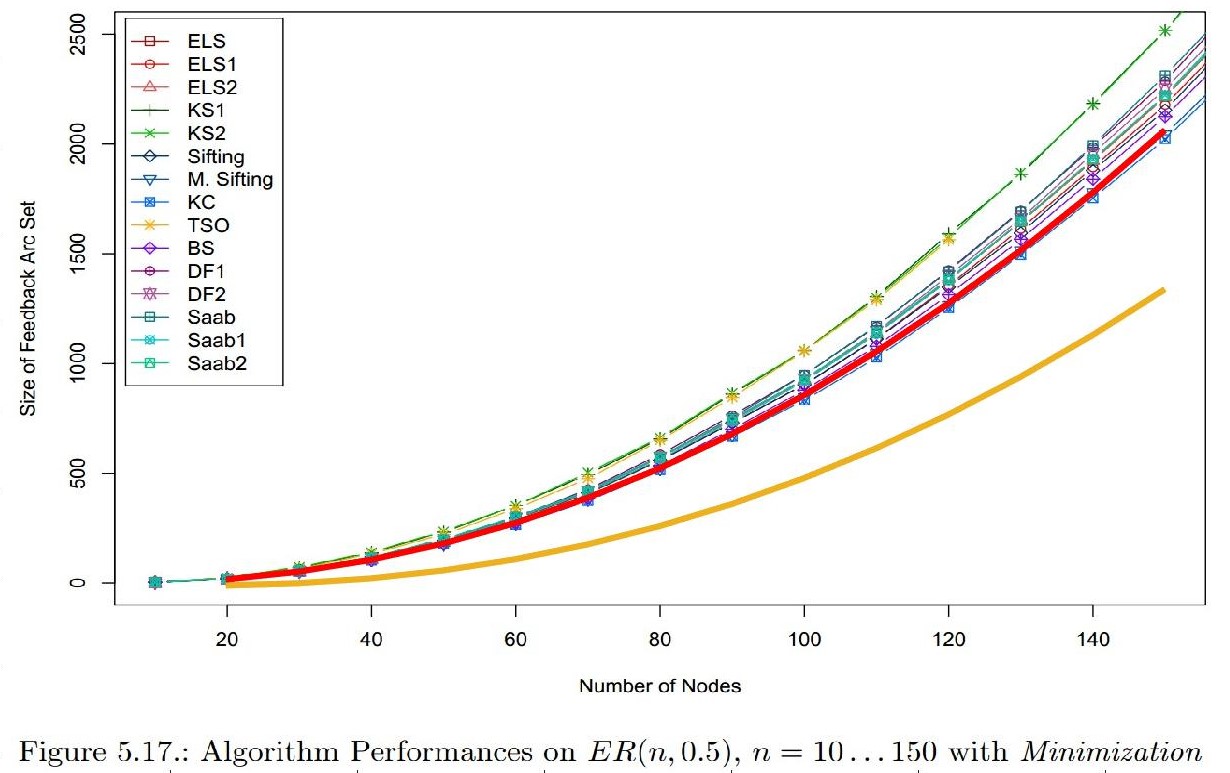}
    \includegraphics[width=8cm]{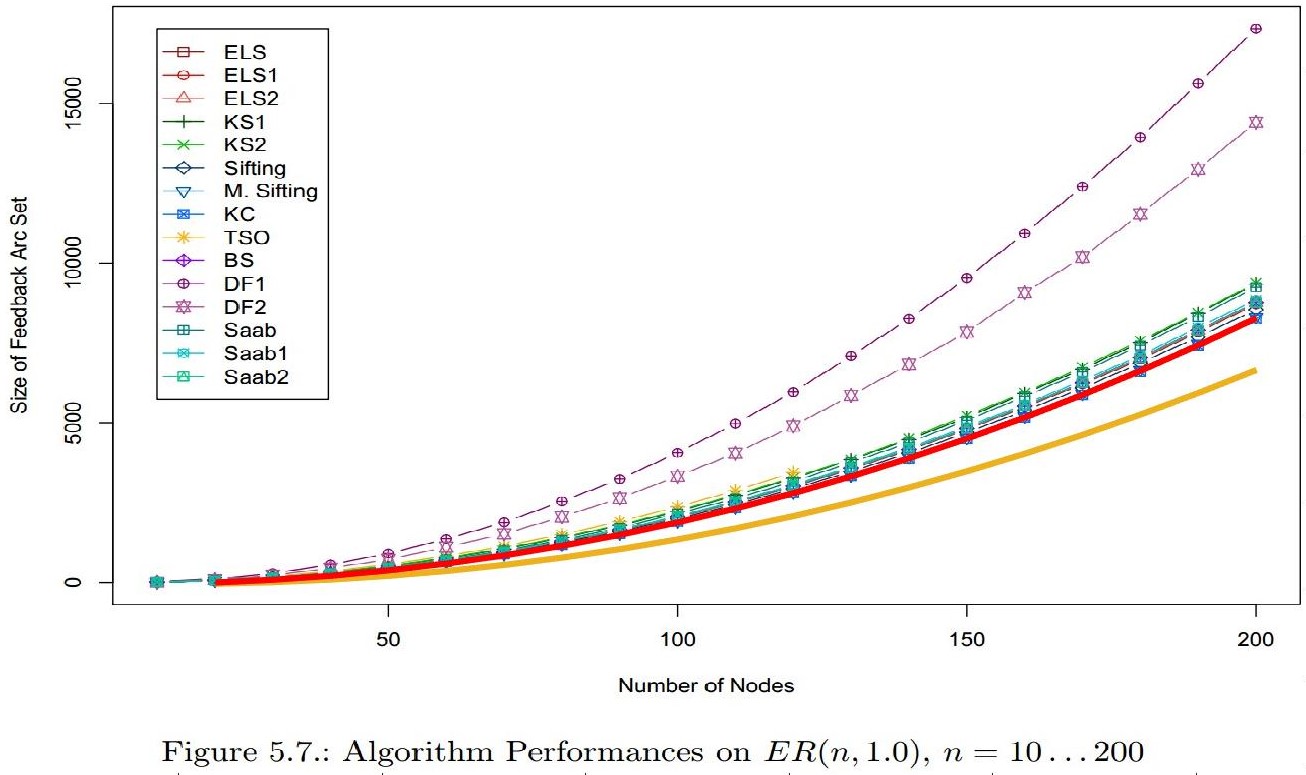}
    \captionsetup{justification=centering}
        \caption{Left:$ER(n,.5), n=10,..,150$ with \textit{Minimization} \\ Right: $ER(n,1), n=10,..,200$}
\end{figure}

\begin{figure}[ht]
    \includegraphics[width=7cm]{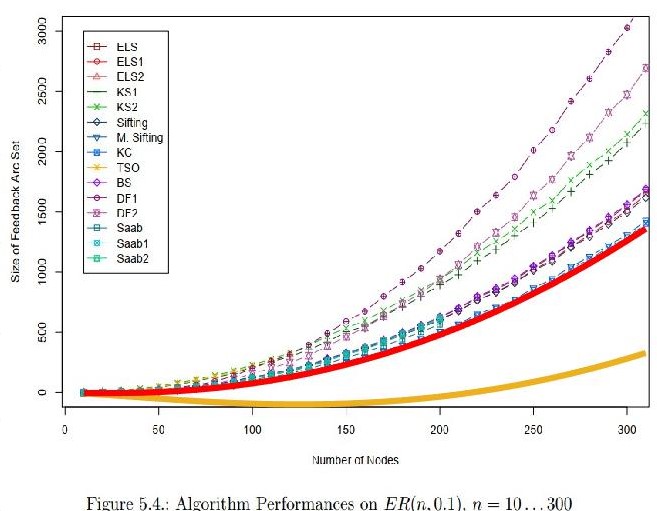}
    \includegraphics[width=8.5cm]{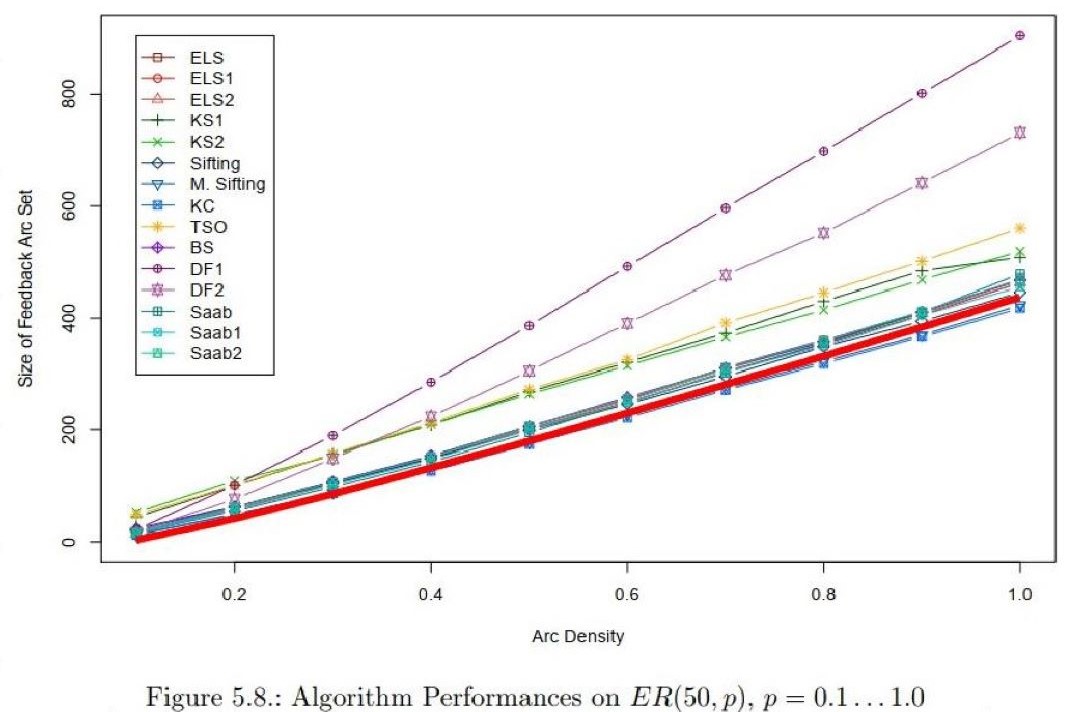}
        \captionsetup{justification=centering}
         \caption{Left: $ER(n,.1), n=20,..,300$ \\Right:$ER(50,p),p=.1,..,1.0$}
\end{figure}

It is interesting that the graphs show that the approximation works well even for reasonably small values of $n$. It would be interesting to further explore the asymptotic behavior of the expected size of the minimum feedback arc set to see whether a theoretical connection can be established with our approximate formula. We surmise as well that with more detailed arguments it may be possible to obtain a lower bound without the $\sqrt{\log n}$ factor, in light of the bounds for tournaments obtained by Spencer and de la Vega. 

\end{linenumbers}
\section {Summary}
In contemplating the minimum Feedback Arc Set problem, it is useful to know how effective reordering the vertices of an oriented directed graph would be in reducing the number of feedback arcs. We have provided here an estimate for a lower bound of the size of the minimum feedback arc set that applies with high probability, effective for small (but constant) values of $\frac{n\log n}{M}$, within a sample space for which each oriented directed graph of $M$ arcs on n vertices is equally likely. In particular, this includes large oriented directed graphs with a uniformly positive fraction of possible arcs. Plotted against experimental data, the lower bound performs as expected, and a simple heuristic approximation derived from that lower bound turns out to be surprisingly accurate in predicting an average size for minimum feedback arc sets, computed over a range of random graphs obtained using the Erd\H{o}s-R\'{e}nyi model $G(n,p)$. 

\par  
$  $

\textbf{Acknowledgement}: We wish to thank Dr. Kathrin Hanauer for providing us with a copy of her Master's Thesis [7] and for giving us permission to reproduce her figures used here as part of this paper. 


\section{References}

[1] Bonnie Berger and Peter W Shor. “Approximation alogorithms for the maximum acyclic subgraph problem”. In: \textit{Proceedings of the first annual ACM-SIAM symposium on Discrete algorithms.} 1990, pp. 236–243. 

\noindent 
[2] B Bollob{\'a}s \textit{Random Graphs.} Cambridge, UK, 2001

\noindent
[3] Harvey Diamond, Mark Kon, and Louise Raphael. \textit{Asymptotics of the Minimal Feedback Arc Set in {E}rd\H{o}s-{R}\'{e}nyi Graphs}. 2024. arXiv: 2401.04187 [math.CO].

\noindent
URL: https://arxiv.org/abs/2401.04187.

\noindent 
[4] Peter Eades, Xuemin Lin, and William F Smyth. ``A fast and effective heuristic for the feedback arc set problem'', 

\noindent In: \textit{Information processing letters} 47.6(1993), pp. 319-323.

\noindent
[5] Paul Erdos and JW Moon. ``On sets of consistent arcs in a tournament''. In: \textit{Canad. Math. Bull}   Vol 8 , Issue 3 , April 1965 , pp. 269 - 271.

\noindent
[6] Jacob Fox, Zoe Himwich, and Nitya Mani. “Extremal results on feedback arc sets in digraphs”. In: \textit{Random Structures and Algorithms} 64.2 (2024), pp. 287–308.

\noindent
[7] Kathrin Hanauer. “Algorithms for the feedback arc set problem”. Masters thesis Univ. Passau, 2010.

\noindent
[8] F. Harary. \textit{Graph Theory}. Reading, MA: Addison-Wesley, 1969.

\noindent
[9] Wassily Hoeffding. “Probability Inequalities for Sums of Bounded Random Variables”.  In: \textit{Journal of the American Statistical Association} 58.301 (1963), pp. 13–30. 

\noindent
ISSN: 01621459, 1537274X. URL: http://www.jstor.org/stable/2282952

\noindent
[10] R. Karp. “Reducibility among combinatorial problems”. In: \textit{Complexity of Computer Computations}. Ed. by R. Miller and J. Thatcher. Plenum Press, 1972, pp. 85–103.

\noindent
[11] Robert Kudeli{\'c}. \textit{Feedback arc set: a history of the problem and algorithms}. Springer Nature, 2022.

\noindent
[12] Joel Spencer. ``Optimally ranking unrankable tournaments''. In: \textit{Periodica Mathematica Hungarica} 11.2 (1980), pp. 131-144

\noindent
[13] W Fernandex de la Vega. ``On the maximum cardinality of a consistent set of arcs in a random tournament'', In: \textit{IEEE Transactions on Circuit Theory} 10.2 (1963), pp. 238-245

\noindent
[14] D Younger. “Minimum feedback arc sets for a directed graph”. In: \textit{IEEE Transactions on Circuit Theory 10.2} (1963), pp. 238–245.

\end{document}